\newtheorem{dfn}{Definition}[section]
\newtheorem{thm}{Theorem}[section]
\newtheorem{prop}{Proposition}[section]
\newtheorem{lem}{Lemma}[section]
\newtheorem{cor}{Corollary}[section]
\newtheorem{obs}{Observation}[section]
\DeclareMathOperator{\fr}{FR}
\title{Are Ramsey Algebras Essentially Semigroups\footnote{The results of this paper were presented at the 2016 Asian Mathematical Conference in Bali, Indonesia.}}
\author{%
\textsc{Zu Yao Teoh} \\[1ex] 
\normalsize School of Mathematical Sciences, \\ Universiti Sains Malaysia, \\ 11800 USM, Malaysia. \\ 
\normalsize \href{mailto:teohzuyao@gmail.com}{teohzuyao@gmail.com} 
\and 
\textsc{Andrew Rajah} \\[1ex] 
\normalsize School of Mathematical Sciences, \\ Universiti Sains Malaysia, \\ 11800 USM, Malaysia. \\ 
\normalsize \href{mailto:andy@usm.my}{andy@usm.my} 
\and
\textsc{Wen Chean Teh}\thanks{Corresponding author.} \\[1ex] 
\normalsize School of Mathematical Sciences, \\ Universiti Sains Malaysia, \\ 11800 USM, Malaysia. \\ 
\normalsize \href{mailto:dasmenteh@usm.my}{dasmenteh@usm.my} 
}
\date{}
\begin{document}
\maketitle

\abstract{It is known that semigroups are Ramsey algebras. This paper is an attempt to understand the role associativity plays in a binary system being a Ramsey algebra. Specifically, we show that the nonassociative Moufang loop of octonions is not a Ramsey algebra.} \\

{\bf{2010 Mathematics Subject Classification:}} 03E05, 05D10

\emph{Keywords}: Binary systems, Ramsey algebras, octonions, semigroups, associativity, nonassociative Moufang loops.

\section{Introduction}
Ramsey algebra has its roots in Hindman's theorem \cite{NH74}. Hindman's theorem states that, for each finite partition of the positive integers $\mathbb{Z}^+$, there exists an infinite $H\subseteq\mathbb{Z}^+$ contained in one of the pieces $A$ such that $a_1+\cdots+a_n\in A$ whenever $a_1, \ldots, a_n$ are distinct elements of $H$. (One can also formulate Hindman's theorem in terms of finite sets of natural numbers and the set theoretic operation $\cup$. See Milliken \cite{milliken1975ramsey} for details.) This result can be viewed as a combinatorial result on the algebra $(\mathbb{Z}^+, +)$. Ramsey algebra, a name suggested by Carlson, prescribes Ramsey type combinatorics to algebras. Early works on the topic by Teh can be found in \cite{teh2014ramsey}, \cite{wcT13a}, \cite{wcT13b}, and \cite{tehidem16}. The notion of Ramsey algebra came into conception when Carlson singled out the class of Ramsey spaces induced by algebras and saw the potential of a purely combinatorial study of such spaces. Ramsey spaces had previously been introduced by Carlson \cite{carlson1988some} as a generalization to the Ellentuck space introduced in conjunction with Ellentuck's theorem \cite{ellentuck1974new}, which generalizes the results by Galvin \& Prikry \cite{GP73} and Silver \cite{silver1970every}.

It is known that all semigroups are Ramsey algebras. It is also known that $(\mathbb{Z}, -)$, where $-:\mathbb{Z}^2\rightarrow\mathbb{Z}$ is defined by $-(x, y)=y-x$, is not a Ramsey algebra (Theorem 2.3.2, page 41, \cite{tehthesis}). Effort to identify more binary systems that are Ramsey has, except for some pedagogical ones, been met with failure thus far. $(\mathbb{Z}, -)$ is a good example of an algebraic system not too remote from a semigroup that turns out not to be a Ramsey algebra. We are thus led to ask if associativity is the salient property that makes all semigroups Ramsey algebras. In this paper, we strive to investigate further at the role associativity plays in determining whether a binary system is Ramsey. In particular, we will show that the octonions with multiplication forming a nonassociative Moufang loop is not a Ramsey algebra.

\section{Preliminaries}
The set of natural numbers $0, 1, 2, \ldots$ will be denoted by $\omega$; the set of positive integers, i.e. $\omega\setminus\{0\}$, is denoted by $\mathbb{Z}^+$. Infinite sequences will be emphasized by an arrow over a letter such as $\vec{b}$ and $\vec{a}$.

\subsection{The Octonions with Mutiplication}\label{basicoctfacts}
The (real) octonions are real linear combinations of the eight unit octonions $e_0, e_1, \ldots, e_7$. Multiplication of octonions is nonassociative and noncommutative. The product $a\cdot b$ of octonions $a=\sum_{i=0}^7 a_ie_i, b=\sum_{j=0}^7 b_je_j$ is given by $\sum_{i, j\in\{0, \ldots, 7\}}a_ib_je_ie_j$, where the products of the unit octonions are given in Table \ref{octmult}.

For the product of three or more octonions, nonassociativity begins to factor in. If $a=\sum_{i=0}^7 a_ie_i, b=\sum_{j=0}^7 b_je_j$, and $c=\sum_{k=0}^7c_ke_k$ are octonions, then the products $(ab)c$ and $a(bc)$ are distinct in general. It is instructive to note that the difference between these two products has root in the product of unit octonions under the two different bracketings. The product $(ab)c$ is given by
\begin{equation}
(ab)c=\sum_{i, j, k\in\{0 \ldots, 7\}}a_ib_jc_k(e_ie_j)e_k,
\end{equation}
whereas the product $a(bc)$ is given by
\begin{equation}
a(bc)=\sum_{i, j, k\in\{0 \ldots, 7\}}a_ib_jc_ke_i(e_je_k).
\end{equation}
Thus, we see that the \emph{coefficient} $a_ib_jc_k$ of each corresponding summand in the two products are equal for the same string of unit octonions involving $e_i, e_j, e_k$ in the order indicated; we conclude that, if associativity fails in this product, then it is the bracketings of the unit octonions that play a role. This observation applies to the products of three or more octonions in general; Corollary \ref{uptosign} and Proposition \ref{mainprop} will address this aspect in greater detail. Also related are the sets $\Lambda_j^t$ to be defined in Section 4. Other pertinent properties of products of the unit octonions will be given in the next section.

\begin{table}\label{octmult}
\centering
    \begin{tabular}{ | c || c | c | c | c | c | c | c | c |}
    \hline
     $\cdot$ & $e_0$ & $e_1$ & $e_2$ & $e_3$ & $e_4$ & $e_5$ & $e_6$ & $e_7$ \\ \hline
    $e_0$ & $e_0$ & $e_1$ & $e_2$ & $e_3$ & $e_4$ & $e_5$ & $e_6$ & $e_7$  \\ \hline
    $e_1$ & $e_1$ & $-e_0$ & $e_3$ & $-e_2$ & $e_5$ & $-e_4$ & $-e_7$ & $e_6$ \\ \hline
    $e_2$ & $e_2$ & $-e_3$ & $-e_0$ & $e_1$ & $e_6$ & $e_7$ & $-e_4$ & $-e_5$ \\ \hline
    $e_3$ & $e_3$ & $e_2$ & $-e_1$ & $-e_0$ & $e_7$ & $-e_6$ & $e_5$ & $-e_4$ \\ \hline
    $e_4$ & $e_4$ & $-e_5$ & $-e_6$ & $-e_7$ & $-e_0$ & $e_1$ & $e_2$ & $e_3$ \\ \hline
    $e_5$ & $e_5$ & $e_4$ & $-e_7$ & $e_6$ & $-e_1$ & $-e_0$ & $-e_3$ & $e_2$ \\ \hline
    $e_6$ & $e_6$ & $e_7$ & $e_4$ & $-e_5$ & $-e_2$ & $e_3$ & $-e_0$ & $-e_1$ \\ \hline
    $e_7$ & $e_7$ & $-e_6$ & $e_5$ & $e_4$ &$-e_3$ & $-e_2$ & $e_1$ & $-e_0$ \\ \hline
    \end{tabular}
\caption{Multiplication table for the unit octonions.}
\end{table}

\subsection{Binary Systems, Bracketed Strings, Assignments}
Our notion of a Ramsey algebra will be made in the setting of binary systems; a binary system $(G, \cdot)$ is an algebra where $\cdot$ is a binary operation on $G$. The multiplicative notation will be assumed throughout this paper; specifically, the symbol $\cdot$ will be omitted when the context is clear. Towards this end, let $G$ denote a nonempty set and $\cdot$ a binary operation on $G$. We also fix a set $V=\{x_0, x_1, \ldots\}$ of variables throughout.

Strings of $V$ will eventually be given values in $G$. To be more precise, it is strings with bracketing that will be given values in $G$. We first have to define the notion of a bracketing on strings or \emph{bracketed} strings; this will be done on an arbitrary set $A$. Bracketed strings are defined recursively as follows:
\begin{dfn}
The set $\mathcal{T}_A$ of bracketed strings of $A$ is the set of strings of $A\cup\{(, )\}$ such that
\begin{enumerate}
\item each $a\in A$ is a bracketed string, and
\item if $t_1, t_2$ are bracketed strings, then $(t_1t_2)$ is a bracketed string.
\end{enumerate}
\end{dfn}

Readers familiar with logical terminologies will notice that bracketed strings of $A$ are similar to what are known as \emph{terms}, hence the notation $\mathcal{T}$ to denote the set of bracketed strings. If $t$ is a bracketed string, we write $\check{t}$ to denote the string obtained from $t$ by omitting the brackets and we call $\check{t}$ \emph{unbracketed}.

A function $\mu:V\rightarrow G$ will be known as an \emph{assignment} on $V$ to abstract the idea of $\mu$ assigning a value to each variable in $V$, value of which lying in $G$.

\begin{dfn}
For each $t\in\mathcal{T}_V$ and each assignment $\mu:V\rightarrow G$, we will define $t^\mu\in G$ recursively as follows:
\begin{enumerate}
\item For each $n\in\omega$, ${x_n}^\mu=\mu(x_n)$.
\item If $(t_1t_2)\in\mathcal{T}_V$, then $(t_1t_2)^\mu={t_1}^\mu{t_2}^\mu$.
\end{enumerate}
\end{dfn}

We remark that, for any $G$, each $t\in\mathcal{T}_G$ has its natural interpretation. For instance, if $G$ is the set of unit octonions, then $t=(e_1(e_2e_3))$ \emph{evaluates} to $-e_0$. On the other hand, given any string in $s=\mathcal{T}_G$, there exists an assignment $\mu$ and $t\in\mathcal{T}_V$ such that $t^\mu=s$ (equality as elements of $G$).

\subsection{Reductions and Ramsey Algebras}
The concept of a reduction is central to the notion of a Ramsey algebra. To define the concept, let us identify each unbracketed string $\check{t}=a_0\cdots a_n$ of $A$ with its corresponding finite sequence $\langle a_0, \ldots, a_n\rangle$ and when $t, t'$ are finite sequences, we write $t^\frown t'$ to denote the concatenation of $t$ with $t'$.
\begin{dfn}\label{reduction}
If $\vec{a}=\langle a_0, a_1, \ldots\rangle$ and $\vec{b}=\langle b_0, b_1, \ldots\rangle$ are infinite sequences of $G$  such that there exist bracketed strings $t_i$ of $G$ for each $i\in\omega$ such that
\begin{enumerate}
\item $\check{t}_0^\frown\check{t}_1^\frown\cdots$ forms an infinite subsequence of $\vec{b}$ and
\item $t_i$ evaluates to $a_i$ for each $i\in\omega$,
\end{enumerate}
then we say that $\vec{a}$ is a \emph{reduction} of $\vec{b}$, denoted by $\vec{a}\leq\vec{b}$.
\end{dfn}
If $\vec{b}$ is an infinite sequence of $G$, then any infinite subsequence of $\vec{b}$ is a trivial example of a reduction of $\vec{b}$. For another example, if $\vec{a}, \vec{b}$ are such that $a_0=b_1$, $a_1$ is the value of $(b_2(b_3b_4))$, $a_2$ is the value of $((b_6b_7)b_9)$, $\ldots$, then $\vec{a}$ is a reduction of $\vec{b}$. We mention in passing that $\leq$ is \emph{reflexive} and \emph{transitive} on the set of infinite sequences of $G$.

One subclass of $\mathcal{T}_V$ is of particular interest, namely the class of $t\in\mathcal{T}_V$ such that the indices of the variables occurring in $t$ are strictly increasing from left to right; members of this class will be called \emph{orderly} bracketed strings of variables. We denote this class by $\mathcal{OT}_V$. This class of strings is important when we are concerned with reductions. Specifically, the fact that $\check{t}_0^\frown\check{t}_1^\frown\cdots$ in Definition \ref{reduction} forming a subsequence of $\vec{b}$ forces each of the $t_i$ to be ``orderly." Furthermore, if $t, t'\in\mathcal{OT}_V$, we write $t\prec t'$ to mean that the greatest index occurring in the variables of $t$ is less than the least index in the variables occurring in $t'$. Thus, $\vec{a}\leq\vec{b}$ if and only if there exist orderly bracketed strings of variables $t_i\prec t_{i+1}$ for each $i\in\omega$ such that $a_i=t_i{^\mu}$ under the assignment $\mu(x_n)=b_n$, $n\in\omega$.

The sets $\fr(\vec{a})$'s are also integral to the notion of a Ramsey algebra:
\begin{dfn}
Given an infinite sequence $\vec{a}$ of $G$, define $g\in\fr(\vec{a})\subseteq G$ if and only if $g$ is the value of a bracketed string $t$ of terms of $\vec{a}$ such that $\check{t}$ is a finite subsequence of $\vec{a}$.
\end{dfn}

In the orderly terminology, $g\in\fr(\vec{a})$ if and only if $g=t^\mu$ for some orderly $t\in\mathcal{T}_V$, where $\mu$ is the assignment replacing $x_i$ with the $i$th term of $\vec{a}$. Note that, for any sequence $\vec{a}$ of $G$, the set $\fr(\vec{a})$ is nonempty since each term of $\vec{a}$ is an element of the set.

We can now define when a binary system $(G, \cdot)$ is a Ramsey algebra.

\begin{dfn}[Ramsey algebra]
Let $(G, \cdot)$ be a binary system. Then $(G, \cdot)$ is said to be a Ramsey algebra if, given an $X\subseteq G$ and an infinite sequence $\vec{b}$ of elements of $G$, there exists an $\vec{a}\leq\vec{b}$ such that $\fr(\vec{a})\subseteq X$ or $\fr(\vec{a})\subseteq X^C$.
\end{dfn}

We want to emphasize that the definitions above are adapted to deal with the class of binary systems. The general treatment of Ramsey algebras can be found in \cite{teh2014ramsey} and a generalization to heterogeneous algebras can be found in \cite{teohteh}.

\subsection{Finite Moufang Loops and $M(G, 2)$}
We take a brief digression to see that all finite Moufang loops as well as the class of Moufang loops $M(G, 2)$, one for each group $G$, are Ramsey algebras. In either of these cases, the key to being a Ramsey algebra owes to the fact that every given infinite sequence has a ``nice" reduction that leads to the binary system being Ramsey.

\begin{thm}
Every finite Moufang loop is a Ramsey algebra.
\end{thm}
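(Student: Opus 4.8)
The plan is to exploit finiteness via the pigeonhole principle and then collapse everything to the identity element. Given $X \subseteq G$ and an infinite sequence $\vec{b}$ of elements of the finite Moufang loop $G$, I would first observe that since $G$ is finite some $g \in G$ occurs infinitely often in $\vec{b}$, say at positions $j_0 < j_1 < \cdots$. After this step the problem is reduced to producing a ``nice'' reduction of the constant sequence $\langle g, g, g, \ldots\rangle$, and I will produce one that collapses to the identity.

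Next I would show that $g$ admits a power equal to the identity. Writing $g^{(1)} = g$ and $g^{(n+1)} = g^{(n)}g$ for the left-associated powers, among the $|G|+1$ elements $g^{(1)}, \ldots, g^{(|G|+1)}$ two must coincide, and repeated application of the right-cancellation law available in a loop yields $g^{(p)} = e$ for some $p \geq 1$. Consequently, if $t$ denotes the orderly left-associated bracketed string $(\cdots((x_0x_1)x_2)\cdots x_{p-1})$ on $p$ variables, then $t$ evaluates to $e$ under the assignment sending every variable to $g$. Partitioning the positions $j_0 < j_1 < \cdots$ into consecutive blocks of length $p$ and bracketing each such block of $p$ chosen terms by the corresponding left-associated product exhibits the constant sequence $\vec{c} = \langle e, e, e, \ldots\rangle$ as a reduction of $\vec{b}$ in the sense of Definition \ref{reduction}.

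It then remains to observe that $\fr(\vec{c}) = \{e\}$. A straightforward structural induction on bracketed strings shows that any bracketed string all of whose letters are $e$ evaluates to $e$: the base case is immediate, and the inductive step uses $ee = e$. Conversely $e \in \fr(\vec{c})$ as the first term of $\vec{c}$. Hence $\fr(\vec{c}) \subseteq X$ if $e \in X$, and $\fr(\vec{c}) \subseteq X^C$ otherwise, so $\vec{c}$ witnesses the Ramsey condition for the pair $(X, \vec{b})$; since $X$ and $\vec{b}$ were arbitrary, $(G, \cdot)$ is a Ramsey algebra.

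The argument is short, and the only structural facts about $G$ it really uses are that $G$ is finite and that $G$ is a loop, so that the cancellation laws and a two-sided identity are available; the full strength of the Moufang identities (in particular, Moufang's diassociativity theorem) is not needed for this particular statement. The one point that warrants care is the reduction-to-identity step: one must verify that the left-associated powers of $g$ genuinely reach $e$ using only loop cancellation, and must arrange the block decomposition of $\{j_0, j_1, \ldots\}$ so that it legitimately matches Definition \ref{reduction}. I expect that to be the only mildly technical part of an otherwise routine verification.
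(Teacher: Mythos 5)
Your proposal is correct and follows essentially the same strategy as the paper's two-line proof: pigeonhole a repeated element $g$, collapse blocks of $g$'s to the identity, and note that $\fr(\langle e,e,\ldots\rangle)=\{e\}$. The one added value in your write-up is that you justify the ``finite order'' step using only left-associated powers and loop cancellation, so your argument actually proves the stronger statement that \emph{every} finite loop is a Ramsey algebra, whereas the paper's phrase ``each element of the loop has finite order'' implicitly leans on the diassociativity of Moufang loops.
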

\begin{proof}
Given any infinite sequence of the Moufang loop, one of the elements of the loop will occur infinitely often in the sequence. Since each element of the loop has finite order, we can find a reduction of the given sequence made up of the identity element.
\end{proof}

Now, let $(G, \cdot)$ be a group and introduce a new symbol $u$ not already in $G$. Define $Gu$ to be the set of symbols $\{gu:g\in G\}$. Then, $M=G\cup Gu$, along with the binary operation $\ast$ to be introduced below, is a Moufang loop by adopting the following rules for $\ast$:
\begin{enumerate}
\item $\ast$ restricted to $G$ coincides with $\cdot$.
\item For all $g, h\in G$,
\begin{enumerate}
\item $(gu)h=(gh^{-1})u$,
\item $g(hu)=(hg)u$,
\item $(gu)(hu)=h^{-1}g$.
\end{enumerate}
\end{enumerate}
This Moufang loop is denoted by $M(G, 2)$ and it is associative if and only if $G$ is abelian.

Rule 2(c) in the list is the key to the following theorem:
\begin{thm}
$M(G, 2)$ is a Ramsey algebra for any group $G$.
\end{thm}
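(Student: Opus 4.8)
The plan is to exploit rule 2(c), namely $(gu)(hu) = h^{-1}g \in G$, which says that any product of two ``$u$-type'' elements lands back in the group part $G$. Given an arbitrary infinite sequence $\vec{b}$ of $M = G \cup Gu$ and an arbitrary $X \subseteq M$, I first reduce $\vec{b}$ to a sequence all of whose terms lie in $G$. There are two cases: either infinitely many terms of $\vec{b}$ already lie in $G$, in which case I pass to that subsequence; or all but finitely many terms of $\vec{b}$ lie in $Gu$, in which case I group the terms of (a tail of) $\vec{b}$ into consecutive disjoint pairs $\langle b_{2k}, b_{2k+1}\rangle$ and evaluate each bracketed pair $(b_{2k} b_{2k+1})$ using rule 2(c); the resulting values form a reduction $\vec{c} \leq \vec{b}$ with every term of $\vec{c}$ in $G$.

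Next I observe that $G$ is closed under $\ast$ (rule 1 says $\ast$ restricted to $G$ is the group operation $\cdot$), so every bracketed string of terms of $\vec{c}$ evaluates in $G$; hence $\fr(\vec{c}) \subseteq G$ and, moreover, $(G, \cdot)$ as a subsystem is itself a Ramsey algebra because it is a group, hence a semigroup, and semigroups are Ramsey algebras (as noted in the introduction). Applying the Ramsey-algebra property of $(G,\cdot)$ to the sequence $\vec{c}$ and the set $X \cap G \subseteq G$, I obtain $\vec{a} \leq \vec{c}$ with $\fr(\vec{a}) \subseteq X \cap G \subseteq X$ or $\fr(\vec{a}) \subseteq G \setminus (X \cap G) \subseteq M \setminus X = X^C$. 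Since $\leq$ is transitive, $\vec{a} \leq \vec{b}$, and this witnesses the Ramsey-algebra condition for $M(G,2)$.

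The only genuinely delicate point is the bookkeeping in the ``all terms in $Gu$'' case: I must check that pairing up \emph{consecutive} terms and bracketing each pair really does produce a legitimate reduction in the sense of Definition \ref{reduction} — that is, that $\check{t}_0^\frown \check{t}_1^\frown \cdots$ is an (in fact initial, up to a finite shift) subsequence of $\vec{b}$, with each $t_k = (x_{2k}\,x_{2k+1})$ orderly and $t_k \prec t_{k+1}$. This is routine but should be stated carefully. A secondary subtlety worth a sentence is that one should confirm $\fr(\vec{a})$ computed inside the subsystem $(G,\cdot)$ coincides with $\fr(\vec{a})$ computed inside $M$; this is immediate from closure of $G$ under $\ast$, since the recursive evaluation of any bracketed string of elements of $G$ never leaves $G$. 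With these observations in place the proof is short.

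\begin{proof}
Let $\vec{b}$ be an infinite sequence of $M = G \cup Gu$ and let $X \subseteq M$. We first produce a reduction of $\vec{b}$ all of whose terms lie in $G$.

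If infinitely many terms of $\vec{b}$ lie in $G$, let $\vec{c}$ be the subsequence of $\vec{b}$ consisting of those terms; then $\vec{c} \leq \vec{b}$ and every term of $\vec{c}$ is in $G$. Otherwise, all but finitely many terms of $\vec{b}$ lie in $Gu$; discarding an initial segment, we may assume every term of $\vec{b}$ lies in $Gu$. Define $t_k = (x_{2k}\, x_{2k+1}) \in \mathcal{OT}_V$ for each $k \in \omega$; then $t_k \prec t_{k+1}$ and $\check t_0^\frown \check t_1^\frown \cdots = \langle b_0, b_1, b_2, \ldots \rangle$, which is (an initial segment of, hence) a subsequence of $\vec{b}$. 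Under the assignment $\mu(x_n) = b_n$, each $t_k{}^\mu$ is the value of $(b_{2k}\, b_{2k+1})$; writing $b_{2k} = gu$ and $b_{2k+1} = hu$ with $g, h \in G$, rule 2(c) gives $(b_{2k}\, b_{2k+1}) = h^{-1}g \in G$. Hence the sequence $\vec{c} = \langle t_0{}^\mu, t_1{}^\mu, \ldots \rangle$ is a reduction of $\vec{b}$ with every term in $G$.

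By rule 1, $\ast$ restricted to $G$ is the group operation of $G$, so $G$ is closed under $\ast$; consequently the recursive evaluation of any bracketed string of terms of $\vec{c}$ stays within $G$, so $\fr(\vec{c}) \subseteq G$ and, for any reduction $\vec{a} \leq \vec{c}$, the set $\fr(\vec{a})$ is the same whether computed in $M$ or in the subsystem $(G, \cdot)$. Since $(G, \cdot)$ is a group, it is a semigroup, and semigroups are Ramsey algebras; applying this to the sequence $\vec{c}$ and the set $X \cap G \subseteq G$, we obtain $\vec{a} \leq \vec{c}$ with $\fr(\vec{a}) \subseteq X \cap G$ or $\fr(\vec{a}) \subseteq G \setminus (X \cap G)$. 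In the first case $\fr(\vec{a}) \subseteq X$; in the second case $\fr(\vec{a}) \subseteq M \setminus X = X^C$. Since $\leq$ is transitive, $\vec{a} \leq \vec{b}$, so $M(G, 2)$ is a Ramsey algebra.
\end{proof}
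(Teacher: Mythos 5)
Your proof is correct and follows exactly the paper's strategy: reduce an arbitrary sequence of $M$ to one lying entirely in $G$ (by taking a subsequence or pairing terms of $Gu$ via rule 2(c)), then invoke the fact that groups, being semigroups, are Ramsey algebras, together with transitivity of $\leq$. The paper only sketches this idea in two sentences, so your version is simply a careful write-up of the same argument.
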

\begin{proof}
The idea is that every infinite sequence of $M$ has a reduction consisting of elements of $G$: Given an infinite sequence of $M$, we can find a reduction of the sequence consisting only of group elements, either by taking an infinite subsequence or applying Rule 2(c) in the list above. Then, since groups are Ramsey algebras, we have the desired result.
\end{proof}

\section{Preparatory Results}
This section will set up the required components for Section 4.
\subsection{Basic Properties of Products of Unit Octonions}
Some basic properties of octonion multiplication will come in handy. A glance through the multiplication table reveals two immediate properties:
\begin{equation}
e_ie_j=-e_je_i\;\text{for all distinct, nonzero}\;i, j.
\end{equation}
and
\begin{equation}
e_i^2=-e_0\;\text{for all}\;i\neq 0.
\end{equation}
Of course, as the identity element, $e_0^2=e_0$.

Less evident is the fact that, for distinct, nonzero $i, j, k$ such that $e_ie_j\neq e_k$, we have
\begin{equation}
e_i(e_je_k)=-(e_ie_j)e_k.
\end{equation}
In addition, because the unit octonions form a Moufang loop, we have what are called the left, right alternative identities and the flexible identity, and they yield $e_i(e_ie_j)=(e_i^2)e_j$, $(e_ie_j)e_j=e_i(e_j^2)$, and $e_i(e_je_i)=(e_ie_j)e_i$.

Thus, the product of three unit octonions under different bracketings are equal up to a sign difference. More of this will be discussed in the next subsection.

\subsection{Strings of Unit Octonions}
Strings of unit octonions play an important role in our analysis of products of octonions. We begin by singling out a class of bracketed strings called right-associative strings of octonions, where all brackets are associated to the right. For example, $(e_0(e_1(e_2e_3)))$ is a right-associative bracketing of the string $e_0e_1e_2e_3$. This class of strings will serve as a point of reference to compare with other bracketed strings.

Call two octonions $e, f$ equal \emph{up to a sign difference} if $f$ and $e$ are equal or each is the negative of the other and denote the relation by $f\sim e$. (Note that equality up to sign is an equivalence relation on the octonions.)

\begin{lem}
Every bracketed string of unit octonions evaluates, up to a sign difference, to the value of its right-associative form.
\end{lem}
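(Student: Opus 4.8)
The plan is to induct on the structure of the bracketed string $t \in \mathcal{T}_G$, where $G$ is the set of unit octonions (together with their negatives, so that the relation $\sim$ and sign bookkeeping stay inside the set we are working with). The base case is immediate: a single unit octonion $e_i$ is already in right-associative form, so it evaluates to itself. For the inductive step, write $t = (t_1 t_2)$ where $t_1, t_2$ are shorter bracketed strings; by the induction hypothesis, $t_1$ evaluates, up to sign, to the right-associative bracketing $r_1$ of the unbracketed string $\check t_1 = e_{i_1} \cdots e_{i_p}$, and similarly $t_2 \sim r_2$ where $r_2$ is the right-associative bracketing of $\check t_2 = e_{i_{p+1}} \cdots e_{i_{p+q}}$. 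Since multiplication is sign-compatible (negating either factor negates the product), it suffices to show $(r_1 r_2) \sim r$, where $r$ is the right-associative bracketing of the full string $e_{i_1} \cdots e_{i_{p+q}}$.

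So the crux reduces to a statement purely about right-associative strings: multiplying two right-associated products on the left/right and re-associating everything to the right changes the value by at most a sign. I would prove this by a secondary induction on $p$, the length of $\check t_1$. If $p = 1$ then $r_1 = e_{i_1}$ and $(e_{i_1} r_2)$ is already the right-associative form $r$, giving equality outright. If $p \ge 2$, write $r_1 = (e_{i_1} r_1')$ where $r_1'$ is the right-associative product of $e_{i_2} \cdots e_{i_p}$. Then $(r_1 r_2) = ((e_{i_1} r_1') r_2)$, and here I invoke the three-octonion re-association identity from Section 3.1, namely that for unit octonions $a(bc) = \pm (ab)c$ in all cases (this is the content of the displayed equation $e_i(e_je_k) = -(e_ie_j)e_k$ for distinct nonzero $i,j,k$ with $e_ie_j \ne e_k$, together with the alternative and flexible identities and the trivial cases involving $e_0$ or repeated indices that collapse via $e_i^2 = -e_0$). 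Applying it with $a = e_{i_1}$, $b = r_1'$ (as a single octonion value), $c = r_2$ gives $((e_{i_1} r_1') r_2) \sim (e_{i_1}(r_1' r_2))$. Now $(r_1' r_2)$ is a product of a right-associative string of length $p-1$ with $r_2$, so by the secondary induction hypothesis it is, up to sign, the right-associative product $r'$ of $e_{i_2} \cdots e_{i_{p+q}}$; hence $(e_{i_1}(r_1' r_2)) \sim (e_{i_1} r') = r$, and chaining the sign equivalences finishes the step.

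The main obstacle I anticipate is the careful verification that the identity $a(bc) \sim (ab)c$ genuinely holds for \emph{all} unit octonions $a, b, c$ — the Section 3.1 statement is phrased for distinct nonzero indices with $e_ie_j \ne e_k$, so I must separately dispatch the degenerate cases: when one of the indices is $0$ (associativity is exact, since $e_0$ is the identity), when two indices coincide (the alternative/flexible identities $e_i(e_ie_j) = (e_i^2)e_j$, $(e_ie_j)e_j = e_i(e_j^2)$, $e_i(e_je_i) = (e_ie_j)e_i$ handle these), and when $e_ie_j = e_k$ (where exact associativity again holds, as one checks from the table that $e_i(e_je_k) = (e_ie_j)e_k$ in that subalgebra). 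A secondary subtlety is that in the secondary induction I am treating intermediate right-associative products as single octonion values and feeding them into a three-variable identity; this is legitimate precisely because $\sim$ is an equivalence relation compatible with multiplication, but it is worth stating explicitly so the sign-tracking is unambiguous. Once these cases are tabulated, the double induction goes through routinely.
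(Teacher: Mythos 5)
Your proposal is correct and follows essentially the same route as the paper: induct on the string, reduce to the right-associative forms of the two factors, peel off the leading unit octonion $e_{i_1}$, apply the three-octonion re-association identity up to sign, and recurse on the shorter product (the paper re-invokes its main induction hypothesis where you run an explicit secondary induction, a purely organizational difference). Your explicit tabulation of the degenerate cases of $a(bc)\sim(ab)c$ is in fact more careful than the paper's ``as required.''
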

\begin{proof}
The base case is trivial as the bracketings are each unique.

Now, given the bracketed string $t$ of unit octonions such that  $\check{t}=e_{i_1}\cdots e_{i_{N+1}}$, let $t$ be such that $t=(t_1t_2)$, where $t_1, t_2\in\mathcal{T}_{\{e_0, \ldots, e_7\}}$ and $\check{t}_1=e_{i_1}\cdots e_{i_n}$ for some $n\in\{1, \ldots, N\}$.

Let $s_1, s_2$ be the right-associative forms corresponding respectively to $t_1, t_2$; we also let $s_1', s\in\mathcal{T}_{\{e_0, \ldots, e_7\}}$ be such that $s_1=(e_{i_1}s_1')$ and $s$ the right-associative form of $(s_1's_2)$. Then, applying induction hypothesis and the basic properties of octonion multiplication as required, we have
\begin{eqnarray}
(t_1t_2) &\sim& (s_1s_2) \\
&\sim& ((e_{i_1}s_1')s_2) \\
&\sim& (e_{i_1}(s_1's_2)) \\
&\sim& (e_{i_1}s), \label{raform}
\end{eqnarray}
where we note that (\ref{raform}) is in the right-associative form.
\end{proof}

\begin{cor}\label{uptosign}
Up to a sign difference, different bracketings of a string of unit octonions evaluate to the same unit octonion.
\end{cor}
\begin{proof}
By the transitivity of $\sim$.
\end{proof}

\begin{prop}\label{mainprop}
Suppose $t, t'\in\mathcal{OT}_V$ are distinct and are such that $\check{t}=\check{t}'$. Then, there exists an assignment $\mu:V\rightarrow\{e_0, \ldots, e_7\}$ such that $t^\mu=e_4=-t'^\mu$.
\end{prop}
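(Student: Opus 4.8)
The plan is to reduce the problem about arbitrary orderly bracketings to a statement about right-associative bracketings of strings of unit octonions, where the sign discrepancy can be computed explicitly. By Corollary \ref{uptosign}, once we fix an assignment $\mu$, the values $t^\mu$ and $t'^\mu$ agree up to a sign difference, so the content of the proposition is entirely about (a) arranging that the common value is nonzero --- indeed $\pm e_4$ --- and (b) arranging that the two signs are actually opposite. The first task is cheap: since $t$ and $t'$ have the same underlying unbracketed string $\check t = x_{i_1}\cdots x_{i_m}$ (with strictly increasing indices because both lie in $\mathcal{OT}_V$), we only need to choose $\mu(x_{i_1}),\dots,\mu(x_{i_m})$ so that \emph{some} bracketing evaluates to $\pm e_4$ and, crucially, every bracketing evaluates to something nonzero; the latter is automatic as long as no two consecutive assigned unit octonions are equal and we stay away from $e_0$, since a product of nonzero unit octonions is a nonzero unit octonion unless a square of a repeated adjacent factor collapses things --- but even $e_i^2 = -e_0 \ne 0$, so in fact every bracketed product of unit octonions is $\pm e_k$ for some $k$, never $0$. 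So the only real work is the sign.

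For the sign, I would induct on the length $m$ of the common string, peeling the bracketings apart at their top-level split. Write $t = (t_1 t_2)$ and $t' = (t_1' t_2')$. There are two cases. If $t_1$ and $t_1'$ have the same length (so $\check t_1 = \check t_1'$ and $\check t_2 = \check t_2'$), then since $t \ne t'$ at least one of the pairs $(t_1,t_1')$ or $(t_2,t_2')$ is a pair of distinct orderly strings on a shorter common string; apply the induction hypothesis to that pair to get an assignment flipping its sign while keeping the other factor's value fixed and nonzero, and note that multiplying by a fixed nonzero unit octonion on the appropriate side preserves the sign flip (this uses that $x \mapsto ex$ and $x \mapsto xe$ are, up to sign, bijections on unit octonions --- in fact on unit octonions $e(-f) = -(ef)$, which is immediate from bilinearity of octonion multiplication). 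If instead $t_1$ and $t_1'$ have different lengths, then the two bracketings genuinely disagree about where the top-level cut is, and I need a ``local'' sign-flip gadget: choose the assignment so that all variables outside a small window (three consecutive positions straddling the relevant bracket boundary) are sent to $e_0 = \mathrm{id}$, reducing everything to a product of three non-identity unit octonions under two different bracketings, and then invoke the basic identity $e_i(e_je_k) = -(e_ie_j)e_k$ from Section 3.1 (valid whenever $i,j,k$ are distinct nonzero and $e_ie_j \ne e_k$) to get the sign flip; pick $e_i,e_j,e_k$ concretely from the table so that the common value is $e_4$, e.g. using a triple whose associator is nontrivial and lands on $\pm e_4$.

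The main obstacle I anticipate is the bookkeeping in the ``different top-level split'' case: collapsing the complement of a three-element window to the identity is clean, but I must check that the resulting three-factor subproblem really does exhibit a bracketing mismatch that the associator identity detects --- i.e. that the mismatch in bracket structure between $t$ and $t'$ survives the collapse rather than becoming a trivial reassociation like $e_i(e_ie_j) = (e_i^2)e_j$ (which does \emph{not} flip sign). Handling this may require choosing the window more carefully (around a position where the two bracket structures first diverge in a way that is not merely a left/right/flexible alternative pattern), or, more robustly, proving a sharper lemma: for any two distinct bracketings of a \emph{generic} string of distinct non-identity unit octonions, the sign discrepancy can be made $-1$ by an appropriate choice of those octonions. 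Once that lemma is in hand, specializing the ``generic'' octonions to force the value to be exactly $e_4$ (using that the unit octonions act transitively enough under left/right multiplication to hit any target) finishes the proof. I'd also want to double-check the edge cases $m = 1$ (then $t = t'$, vacuous) and $m = 2$ (only one bracketing, vacuous), so the induction really starts at $m = 3$ with the associator identity as the base case.
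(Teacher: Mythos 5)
Your overall strategy --- structural induction on the top-level split, collapsing everything outside a three-variable window to the identity $e_0$, and invoking the associator identity $e_i(e_je_k)=-(e_ie_j)e_k$ --- is exactly the paper's, and your case division (same top-level split length versus different) matches the paper's subcases $\check{t}_1=\check{t}_1'$ versus $\check{t}_1\neq\check{t}_1'$. But the step you flag as a worry is a genuine gap, and your proposed window does not close it. If you take \emph{three consecutive positions straddling the bracket boundary} of, say, $t$, it can happen that all three land inside $t_1'$ (or all inside $t_2'$), in which case the collapsed value of $t'$ is some bracketing of $e_ie_je_k$ determined by the \emph{internal} structure of $t_1'$ --- and that internal bracketing may coincide with the one $t$ produces, giving no sign flip; or it may be an alternative-law pattern which, as you yourself note, does not flip sign. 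The fix is to abandon consecutiveness: in the case $\check{t}_1\neq\check{t}_1'$ (say $\check{t}_1$ a proper prefix of $\check{t}_1'$), take the window to be the \emph{first} variable $v_1$, the \emph{last} variable $v_N$, and some $v_k$ occurring in $t_1'$ but not in $t_1$ (such a $k$ with $1<k<N$ exists since $t_1$ and $t_2'$ are nonempty). Then the top-level split of $t$ separates the window as $\{v_1\}\mid\{v_k,v_N\}$ while that of $t'$ separates it as $\{v_1,v_k\}\mid\{v_N\}$, so after sending everything else to $e_0$ the two values are forced to be $e_i(e_je_k)$ and $(e_ie_j)e_k$ respectively, \emph{regardless} of the internal bracketings of the four subterms. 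With $(e_i,e_j,e_k)=(e_5,e_6,e_7)$ (distinct, nonzero, and $e_5e_6=-e_3\neq\pm e_7$) the associator identity applies and the common value is $\pm e_4$, as required.

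A secondary looseness: in the equal-split case you propose to flip the sign of one factor ``while keeping the other factor's value fixed and nonzero.'' That preserves oppositeness of the signs but not the target value: the proposition demands $t^\mu=e_4$ exactly, and multiplying by a fixed nonidentity unit octonion moves $\pm e_4$ to $\pm e_m$ for some other $m$; your deferred ``transitivity'' repair is not worked out and would disturb the inductively chosen assignment. The paper sidesteps this by assigning $e_0$ to every variable of the untouched factor, so the inductive value $\pm e_4$ passes through unchanged. With these two repairs your argument becomes the paper's proof.
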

\begin{proof}
We prove by induction on the complexity of bracketed strings in $\mathcal{OT}_V$.

Let $t, t'\in\mathcal{OT}_V$ be such that $\check{t}=\check{t}'$. If $t, t'$ each has $3$ variables, then one is $(v_1(v_2v_3))$ and the other is $((v_1v_2)v_3)$ for some $v_1, v_2, v_3\in V$. Use any assignment $\mu$ where $v_1, v_2, v_3$ are respectively assigned $e_5, e_6, e_7$. Then, the former is evaluated to $-e_4$ while the latter is evaluated to $e_4$.

For the inductive step, let $t=(t_1t_2)$ and $t'=(t_1't_2')$ for some nonempty orderly bracketed strings $t_1\prec t_2, t_1'\prec t_2'$ such that $\check{t}=v_1\cdots v_N=\check{t}'$. Then we have $t_1\neq t_1'$ or $t_2\neq t_2'$.

If $t_1\neq t_1'$, two possibilities can arise, namely $\check{t}_1=\check{t}_1'$ or $\check{t}_1\neq\check{t}_1'$. In the former case, we may apply induction hypothesis to obtain an assignment $\mu$ such that $t_1^\mu=-t_1'{^\mu}$ while the variables in $t_2$ and $t_2'$ are all assigned $e_0$; then, $t^\mu=-t'^{\mu}$. In the latter case, let $\check{t}_1$ be a subsequence of $\check{t}_1'$ without loss of generality and let $k$ be such that $1<k<N$ and $v_k$ is a symbol in $t_1'$, but not in $t_1$. Then, assign $e_5$ to $v_1$, $e_6$ to $v_k$, and $e_7$ to $v_N$ while all other symbols are assigned $e_0$. One computes $t$ to evaluate to $e_4$ while $t'$ evaluates to $-e_4$.

If $t_1=t_1'$, then it must be the case that $t_2\neq t_2'$. Similar argument as the case above then applies. This completes the proof.
\end{proof}

Note that we have arrived at $e_4$ out of wit; we could have chosen assignments that evaluate to other unit octonions in place of $e_4$.

\section{The Main Result}
We will show that the octonions with multiplication is not a Ramsey algebra by exhibiting a bad sequence $\vec{b}$ and a set $X\subseteq\mathbb{O}$ such that for each $\vec{a}\leq\vec{b}$, we have $\fr(\vec{a})\cap X\neq\varnothing$ and $\fr(\vec{a})\cap X^C\neq\varnothing$, the contrapositive of the defining statement of a Ramsey algebra. The bad sequence $\vec{b}=\langle b_0, b_1, \ldots\rangle$ is given by:
\begin{equation}
b_n=2^{2^{8n+1}}e_0+\cdots+2^{2^{8n+8}}e_7=\sum_{i=0}^7 2^{2^{8n+1+i}}e_i. \label{badseq}
\end{equation}
This bad sequence will be called $\vec{b}$ throughout this section.

We will be appealing to the nonadjacent form of representation of the integers in the proof of our main theorem. The $n$-digit nonadjacent form representation (NAF) of an integer $a$ is $q_{n-1}q_{n-2}\cdots q_0$ with
\begin{equation}
a=\sum_{j=0}^{n-1}q_j2^j,
\end{equation}
where $q_j\in\{-1, 0, 1\}$, $j\in\{0, \ldots, n-1\}$, and $q_j\times q_{j+1}=0$ for each $j\in\{0, \ldots, n-2\}$. This representation of the integers is \emph{unique} \cite{prep71}. NAF, as its name suggests, is a representation of integers such that there is at least a $0$ between any adjacent pair of nonzero digits. We will capitalize on the uniqueness of the NAF representation throughout this section.

In the proof of the main theorem, we will encounter quantities of the form $2^{8n_1+1+\alpha_1}+\cdots+2^{8n_N+1+\alpha_N}$ in abundance, where $n_1<\cdots<n_N$ are nonnegative integers and $\alpha_i\in\{0, \ldots, 7\}$ for $i\in\{1, \ldots, N\}$. Note that these quantities are essentially the usual binary representations of the integers. Hence, for different sets of $n_1<\cdots<n_N$ or $\alpha_i\in\{0, \ldots, 7\}$, the quantities so defined are \emph{distinct}. In fact, since each of these quantities are multiples of $2$, the difference between any distinct two is \emph{at least $2$}. This observation is of particular importance for an application of the uniqueness of NAF in the proof of our main theorem:

\begin{obs}\label{mainobs}
For different sets of $n_1<\cdots<n_N$ or $\alpha_i\in\{0, \ldots, 7\}$, the quantities of the form $2^{8n_1+1+\alpha_1}+\cdots+2^{8n_N+1+\alpha_N}$ are \emph{distinct} and are with difference of at least $2$.
\end{obs}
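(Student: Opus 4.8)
The plan is to observe that each quantity $2^{8n_1+1+\alpha_1}+\cdots+2^{8n_N+1+\alpha_N}$ is simply the binary expansion of a positive integer, and then to read off everything from the uniqueness of binary representations together with an easy parity remark.

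The one point that has to be checked carefully is that, within a single such sum, the $N$ exponents $8n_i+1+\alpha_i$ are pairwise distinct. The constraint $0\le\alpha_i\le 7$ confines $8n_i+1+\alpha_i$ to the block $\{8n_i+1,\ldots,8n_i+8\}$ of eight consecutive integers, and since $n_1<\cdots<n_N$ forces $n_{i+1}\ge n_i+1$, these blocks are pairwise disjoint; this is precisely where $\alpha_i\le 7<8$ matters. Hence the exponents are distinct and are, moreover, already arranged in increasing order, so the sum really is the integer whose binary representation has $1$'s exactly in positions $8n_1+1+\alpha_1,\ldots,8n_N+1+\alpha_N$. If two such sums coincide, uniqueness of binary representation yields that they have the same set of exponents; comparing the two (automatically sorted) lists termwise gives $N=M$ and $8n_i+1+\alpha_i=8m_i+1+\beta_i$, and reducing modulo $8$ recovers $\alpha_i=\beta_i$ and then $n_i=m_i$. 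Thus distinct data $(n_i)$ or $(\alpha_i)$ yield distinct quantities.

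Finally, since $n_1\ge 0$ and $\alpha_1\ge 0$, every exponent appearing is at least $1$, so every such quantity is even; hence any two distinct ones differ by at least $2$. I do not anticipate a genuine obstacle here: the whole content is the disjoint-block observation, which simultaneously secures the distinctness of the exponents and the canonical ordering needed to reconstruct $(n_i)$ and $(\alpha_i)$ from the sum.
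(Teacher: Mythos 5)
Your proof is correct and follows essentially the same route as the paper, which simply notes that these quantities ``are essentially the usual binary representations of the integers'' (hence distinct by uniqueness of binary representation) and are all multiples of $2$ (hence differ by at least $2$). You in fact supply the one detail the paper leaves implicit --- that the exponents land in pairwise disjoint blocks $\{8n_i+1,\ldots,8n_i+8\}$, so the sum genuinely is a binary expansion --- which is a worthwhile addition, not a deviation.
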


Before proceeding to the main theorem, we introduce some notations and conventions to be used in the proof. For each $t\in\mathcal{OT}_V$ and $\alpha\in\{0, \ldots, 7\}^N$ such that $\check{t}=x_{n_1}\cdots x_{n_N}$, we let $\mu_\alpha^t:V\rightarrow\{e_0, \ldots, e_7\}$ be any assignment such that $\mu_\alpha^t(x_{n_i})=e_{\alpha_i}$ for each $i\in\{1, \ldots, N\}$. For each such $t$, denote by $\Lambda_j^t$ the set of $\alpha=(\alpha_1, \ldots, \alpha_N)\in\{0, \ldots, 7\}^N$ such that $t^{\mu_\alpha^t}$ evaluates to $e_j$ up to sign difference. Note that if $t\neq t'$ are such that $\check{t}=\check{t}'$, then Corollary \ref{uptosign} implies that $\Lambda_j^t=\Lambda_j^{t'}$. 

\begin{thm}
The octonions with multiplication is not a Ramsey algebra.
\end{thm}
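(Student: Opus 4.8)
The plan is to use the bad sequence $\vec{b}$ from \eqref{badseq} together with a set $X$ defined so that membership in $X$ is governed by the coefficient of $e_4$ in an octonion. Concretely, I would let $X$ be the set of octonions $\sum_{i=0}^{7} c_i e_i$ whose $e_4$-coefficient $c_4$, when written in nonadjacent form, has its least significant nonzero digit equal to $+1$ (or some similar parity-of-NAF condition that can be flipped by negation and by appending one more block). The point of Observation \ref{mainobs} is that every element of $\fr(\vec{a})$ for $\vec{a}\leq\vec{b}$ will have coordinates that are sums of \emph{distinct} powers of $2$ coming from disjoint blocks of size $8$, so the NAF of each coordinate is rigidly controlled and, crucially, changes in a predictable way under the operations we are allowed to perform.

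The core of the argument is this. Fix any $\vec{a}=\langle a_0,a_1,\dots\rangle\leq\vec{b}$, witnessed by orderly bracketed strings $t_0\prec t_1\prec\cdots$ with $a_i = t_i^{\mu}$ under $\mu(x_n)=b_n$. I would first observe that because multiplication distributes over the real-linear structure, each $a_i$ is a sum, over all assignments $\mu_\alpha^{t_i}$ of unit-octonion values to the variables of $t_i$, of $\pm(\text{product of the relevant }2\text{-powers})\, e_{j(\alpha)}$, where $j(\alpha)$ is the index to which $t_i^{\mu_\alpha^{t_i}}$ evaluates up to sign. By Observation \ref{mainobs} the 2-power products occurring here are all distinct with pairwise gaps at least $2$, so there is no cancellation and no carrying between them: the $e_4$-coordinate of $a_i$ is literally $\sum_{\alpha\in\Lambda_4^{t_i}} \pm 2^{(\cdots)}$, a signed sum of distinct well-separated powers of $2$, hence its own NAF is read off directly. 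The same analysis applies to any element $g\in\fr(\vec{a})$: $g$ is the value of an orderly bracketed string in the $a_i$'s, which unfolds into an orderly bracketed string $s$ in the $b_n$'s, and again the $e_4$-coordinate of $g$ is a signed sum of distinct powers of $2$ indexed by $\Lambda_4^s$, with the sign of each term determined by the bracketing $s$.

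Now I would exploit Proposition \ref{mainprop}: given any single string $t_i$ (with at least $3$ variables — and by passing to a further reduction we may assume each $t_i$ has, say, exactly $3$ variables, or we handle the length-$1$ case separately), there is a companion orderly string $t_i'$ with $\check{t}_i'=\check{t}_i$ such that $t_i'^{\mu}=-t_i^{\mu}$ for a suitable assignment, and more to the point, the set $\Lambda_4^{t_i}$ is nonempty, so the $e_4$-coordinate of $a_i$ is a genuinely nonzero signed sum of separated 2-powers whose trailing NAF digit we can compute. The strategy is then to show that within $\fr(\vec{a})$ we can realize \emph{both} trailing-digit parities: one element by taking a single term or a product whose $\Lambda_4$-set yields trailing digit $+1$, and another by appending one further term from $\vec{a}$ (shifting everything by a higher, disjoint block of 2-powers and thereby leaving the \emph{lowest} nonzero NAF digit of the $e_4$-coordinate unchanged, or changed in a controlled way), or by using a different bracketing on the same underlying string which — by Proposition \ref{mainprop} — flips the sign of the $e_4$-contribution and hence flips the trailing NAF digit from $+1$ to $-1$. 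Either way $\fr(\vec{a})$ meets both $X$ and $X^C$, contradicting the Ramsey property.

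\textbf{Main obstacle.} The delicate point is bookkeeping the sign of the $e_4$-coordinate of an arbitrary $g\in\fr(\vec{a})$: when we form a product of several $a_i$'s under some bracketing, each $a_i$ is itself a $64$-or-more-term sum, and the $e_4$-coordinate of the product is a sum over many tuples of choices, one unit octonion per original variable of $\vec{b}$, with a sign depending on how that whole string of unit octonions brackets. I expect the key lemma to be that, thanks to the block structure of $\vec{b}$ and Observation \ref{mainobs}, exactly one of these tuples produces the \emph{lowest} power of $2$ in the $e_4$-coordinate (the one using the smallest available exponents), so the trailing NAF digit of the $e_4$-coordinate of $g$ is determined by the sign of that single dominant term, which in turn is pinned down by Corollary \ref{uptosign} and Proposition \ref{mainprop}. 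Making that "unique lowest term" claim precise, and checking that appending a fresh term of $\vec{a}$ (or rebracketing) does what we want to that trailing digit, is where the real work lies; the rest is the distributive-law unwinding already signposted in Section 2 and the uniqueness of NAF.
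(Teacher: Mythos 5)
Your distributive unwinding of $t^\mu$ into the signed sum $\sum_{\alpha}p_\alpha^t\,t^{\mu_\alpha^t}$ and the observation that, by Observation \ref{mainobs}, each coordinate is already in nonadjacent form (no cancellation, no carrying) are exactly the computations the paper performs. The gap is in the choice of $X$. You define membership by the \emph{trailing} nonzero NAF digit of the $e_4$-coordinate, and you correctly note that this digit is determined by the sign of the unique term with smallest exponent $E_\alpha=2^{8n_1+1+\alpha_1}+\cdots+2^{8n_N+1+\alpha_N}$ over $\alpha\in\Lambda_4^t$. But that minimizer can be computed: since the blocks are disjoint and increasing, one minimizes $\alpha_N$ first, then $\alpha_{N-1}$, and so on, and the unique element of $\Lambda_4^t$ with $\alpha_2=\cdots=\alpha_N=0$ is $\alpha^*=(4,0,\ldots,0)$. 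Under that assignment the string is $e_4$ multiplied by identities, so \emph{every} bracketing evaluates to $+e_4$; hence $p_{\alpha^*}^t=+2^{E_{\alpha^*}}$ for every orderly $t$, and the trailing NAF digit of the $e_4$-coordinate is $+1$ for every element of every $\fr(\vec{a})$. Your $X$ therefore contains all of $\fr(\vec{a})$ and separates nothing. The same problem defeats your two proposed escape routes: appending a fresh term of $\vec{a}$ only adds higher blocks and (as you yourself note) leaves the trailing digit alone, and rebracketing does not flip it either, because Proposition \ref{mainprop} only guarantees a sign discrepancy at \emph{some} $\alpha\in\Lambda_4$, with no control over where $E_\alpha$ sits in the NAF expansion — in particular it need not be, and in fact never is, the extremal position your predicate reads.

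The paper avoids this entirely by not using an intrinsic digit predicate at all: it takes $X=\{(t_1(t_2t_3))^\mu : t_1\prec t_2\prec t_3\}$, the literal set of values of one bracketing shape, and invokes uniqueness of NAF only to conclude that the full $e_4$-coefficients (or $e_0$-coefficients, when the variable sets differ) of $(t_1(t_2t_3))^\mu$ and $((t_1t_2)t_3)^\mu$ are \emph{unequal integers}, since they are NAF expansions over the same index set $\Lambda_4^t=\Lambda_4^{t'}$ differing in at least one sign. That only requires the existence of a sign flip somewhere, which is exactly what Proposition \ref{mainprop} delivers. To salvage your approach you would need a predicate that inspects the digit at the particular position where the flip occurs, but that position depends on the unknown indices $n_1,\ldots,n_N$, so it cannot be fixed in advance of choosing $X$.
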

\begin{proof}
Throughout the proof, let $\mu$ denote the assignment $\mu(x_n)=b_n$. We will be using the bad sequence given by (\ref{badseq}) and the set $X\subseteq\mathbb{O}$ given by
\begin{equation}
X=\left\{(t_1(t_2t_3))^\mu:t_1, t_2, t_3\in\mathcal{OT}_V\;\text{and}\;t_1\prec t_2\prec t_3\right\}.
\end{equation}

If $t\in\mathcal{OT}_V$ such that $\check{t}=x_{n_1}\cdots x_{n_N}$, observe that
\begin{eqnarray}
t^\mu &=& \sum_{\alpha=(\alpha_1, \ldots, \alpha_N)\in\{0, \ldots, 7\}^N}2^{2^{8n_1+1+\alpha_1}+\cdots+2^{8n_N+1+\alpha_N}}t^{\mu_\alpha^t} \\
&=& \sum_{j=0}^7\left(\sum_{\alpha\in\Lambda_j^t}p_\alpha^t\right)e_j, \label{prod}
\end{eqnarray}
where, for each $\alpha\in\Lambda_j^t$, $p_\alpha^t$ is defined by
\begin{displaymath}
   p_\alpha^t = \left\{
     \begin{array}{lr}
       2^{2^{8n_1+1+\alpha_1}+\cdots+2^{8n_N+1+\alpha_N}} & \text{if}\; t^{\mu_\alpha^t}=e_j \\
       -2^{2^{8n_1+1+\alpha_1}+\cdots+2^{8n_N+1+\alpha_N}} & \text{if}\; t^{\mu_\alpha^t}=-e_j.
     \end{array}
   \right.
\end{displaymath}

{\bf{Claim}}: Suppose that $t_1\prec t_2\prec t_3$ and $t_1'\prec t_2'\prec t_3'$ are orderly bracketed strings of variables and $t=(t_1(t_2t_3)), t'=((t_1t_2)t_3)$. Then, $t^\mu\neq t'{^\mu}$.

{\it{Proof of Claim}}: Consider first the case where $\check{t}=\check{t}'$. We learn from Proposition \ref{mainprop} that there is a corresponding $\alpha\in\Lambda_4^{t}=\Lambda_4^{t'}$ such that
\begin{equation}
t^{\mu_\alpha^t}=-t'{^{\mu_\alpha^{t'}}}. \label{sgndiff}
\end{equation}
Therefore, we see that for this $\alpha$, $p_\alpha^t$ and $p_\alpha^{t'}$ differ in sign. Consequently, although $\Lambda_4^t=\Lambda_4^{t'}$, the coefficients of $e_4$ in the products $t^\mu$ and $t'{^\mu}$ are different by the uniqueness of the NAF because of the sign difference brought about by (\ref{sgndiff}).

As for the case where $t=(t_1(t_2t_3))$ and $t'=((t_1't_2')t_3')$ have different sets $\{x_{n_1}, \ldots, x_{n_N}\}$ and $\{x_{m_1}, \ldots, x_{m_M}\}$ of variables, we note that the quantities $2^{8n_1+1+\alpha_1}+\cdots+2^{8n_N+1+\alpha_N}$ and $2^{8m_1+1+\beta_1}+\cdots+2^{8m_M+1+\beta_N}$ for $(\alpha_1, \ldots, \alpha_N)\in\{0, \ldots, 7\}^N$ and $(\beta_1, \ldots, \beta_M)\in\{0, \ldots, 7\}^M$ are different by the uniqueness of the binary representation of integers. This in turn implies that $\{p_\alpha^t:\alpha\in\Lambda_0^t\}\neq\{p_\beta^{t'}:\beta\in\Lambda_0^{t'}\}$, which means that the coefficients $\sum_{\alpha\in\Lambda_0^t} p_\alpha^t$ and $\sum_{\beta\in\Lambda_0^{t'}} p_\beta^{t'}$ of $e_0$ in the products $t^\mu, t'^{\mu}$ are different by the uniqueness of the NAF representation of integers. Therefore, $t^\mu\neq t'^{\mu}$. \qed (Claim.)

Finally, if $\langle a_1, a_2, a_3, \ldots\rangle=\vec{a}\leq\vec{b}$, then there exist orderly bracketed strings $t_1\prec t_2\prec t_3$ such that $a_1=t_1{^\mu}, a_2=t_2{^\mu}, a_3=t_3{^\mu}$. We may now conclude that the value of $(a_1(a_2a_3))$ is in $X$ while the value of $((a_1a_2)a_3)$ is in $X^C$, hence $\fr(\vec{a})\cap X\neq\varnothing$ and $\fr(\vec{a})\cap X^C\neq\varnothing$. This concludes the proof that $(\mathbb{O}, \cdot)$ is not a Ramsey algebra.
\end{proof}

\section{Conclusion}
We have seen that finite Moufang loops are Ramsey algebras and so are $M(G, 2)$ for every group $G$. A finite Moufang loop is Ramsey because every given infinite sequence has an element of the loop that occurs infinitely often, which can then be reduced to the identity element through a finite order argument. $M(G, 2)$ is a Ramsey algebra because every sequence of $M$ has a reduction all of whose terms satisfy associativity. For the case of octonion multiplication, our proof showed that there exists a sequence all of whose reductions do not have terms that obey associativity. We will be interested to understand how associativity plays a role in deciding whether a binary system is Ramsey or not through the results contained in this paper. Some work along this lines can be found in \cite{wcTroa}, which investigates a local version of Ramsey algebra.

\section{Acknowledgment}
This work grows out of the first author Zu Yao Teoh's research in the doctoral program at Universiti Sains Malaysia.

The first and third authors gratefully acknowledge the support of the Fundamental Research Grant Scheme No.$\sim$203/PMATHS/6711464 of the Ministry of Education, Malaysia, and Universiti Sains Malaysia. Special thanks also go to the School of Mathematical Sciences' postgraduate allocation fund of Universiti Sains Malaysia as well as the Centre International de Math\'{e}matiques Pures et Appliqu\'{e}es (CIMPA) for their financial supports in making attending the AMC 2016 possible.

\end{document}